\newtheorem{theorem}{Theorem}%[section]
\newtheorem{lemma}[theorem]{Lemma}
\newtheorem{proposition}[theorem]{Proposition}
\definecolor{colorcita}{RGB}{21,86,130}
\definecolor{colorref}{RGB}{5,10,177}
\definecolor{colorweb}{RGB}{177,6,38}
\DeclareMathOperator{\dist}{dist}
\DeclareMathOperator{\re}{Re}
\title{A Montel-type theorem for Hardy spaces of holomorphic functions}
\author{Tomás Fernández Vidal\thanks{Supported by PICT 2015-2299} \and Daniel Galicer\thanks{Partially supported by CONICET-PIP 2014-2016 and PICT 2015-2299.}  \and Pablo Sevilla-Peris\thanks{Supported by MICINN and FEDER Project MTM2017-83262-C2-1-P and MECD grant PRX17/00040.}}
\date{}
\begin{document}
	
\maketitle	

\begin{abstract}
We give a version of the Montel theorem for Hardy spaces of holomorphic functions on an infinite dimensional space. 
As a by-product, we  provide a Montel-type theorem for the Hardy space of Dirichlet series.
This approach  also gives an elementary proof of Montel theorem for the classical one-variable Hardy spaces. 
\end{abstract}

\footnotetext[0]{\textit{Keywords:} Montel theorem, Hardy spaces, Infinite dimensional analysis, spaces of Dirichlet series\\
\textit{2010 Mathematics subject classification:} Primary: 30H10,46G20,30B50. Secondary: 42B30 }

\section{Introduction}

Montel's theorem  is one of the basic results in the classical function theory of one complex variable (see e.g. \cite[Theorem~2.9]{conway1978functions}). This theorem states that a family of holomorphic functions defined on an open subset of the complex numbers is normal if and only if it is locally uniformly bounded.
Therefore, every sequence of holomorphic functions on some open set $\Omega$ of the complex plane that is uniformly bounded on the compact subsets of $\Omega$ has a subsequence that converges uniformly (on the compact subsets) to some holomorphic function defined on $\Omega$. \\
We consider here the Hardy space of holomorphic functions on the disc $H_{p}(\mathbb{D})$, with $1 \leq p \leq \infty$ (see, for example, \cite[Chapter~20]{conway2012functions}). These (for each $p$) consist of those holomorphic functions $f : \mathbb{D} \to \mathbb{C}$ so that
\begin{equation}\label{norma fin var}
\Vert f \Vert_{H_{p}(\mathbb{D})} 
= \sup_{0<r<1} \bigg(\int_{\mathbb{T}}  |f(rw)|^{p} \mathrm{d}w \bigg)^{\frac{1}{p}}<\infty \,.
\end{equation}
These spaces are closely related with harmonic analysis. If $H_{p}(\mathbb{T})$ denotes the closed subspace of $L_{p}(\mathbb{T})$ (where the normalised Lebesgue measure on $\mathbb{T} = \{ z \in \mathbb{C} \colon \vert z \vert =1 \}$ is taken) consisting of those functions $f$ for which the Fourier coefficient $\hat{f}(n)$ is $0$ whenever $n<0$. A classical, well known 
result states that $H_{p}(\mathbb{T}) = H_{p}(\mathbb{D})$ as Banach spaces (see again \cite[Chapter~20]{conway2012functions}).\\
It is not difficult to establish a sort of Montel-type theorem for these Hardy spaces. If we have a bounded sequence in $ H_{p}(\mathbb{D})$, then by the Banach-Alaoglu theorem we can extract a weakly$^{*}$-convergent subsequence. Weak$^{*}$-convergence on $H_{p}(\mathbb{D})$ is equivalent to being bounded and uniformly convergent on the compact sets of $\mathbb{D}$ \cite[Chapter~20, Proposition~3.15]{conway2012functions}. This altogether gives the following result,  well known within the area, although we could not find it explicitly in the literature.

\begin{theorem} \label{una variable}
Let $1 \leq p < \infty$. If $(f_n)\subseteq H_{p}(\mathbb{D})$ is a bounded sequence, then there exist  $f\in H_{p}(\mathbb{D})$ and a subsequence $(f_{n_k})$ that converges to $f$ uniformly on the compact subsets of $\mathbb{D}$.
\end{theorem}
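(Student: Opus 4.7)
The paragraph preceding the theorem suggests a Banach--Alaoglu route, but I would give a direct proof, which has the advantage of being transparent and of being the natural template to try to adapt in infinite dimensions. The idea is to show that a bounded sequence in $H_p(\mathbb{D})$ is in fact locally uniformly bounded as a family of holomorphic functions on $\mathbb{D}$, apply the classical Montel theorem to extract a subsequence converging uniformly on compact subsets, and finally check that the limit lies in $H_p(\mathbb{D})$ by a Fatou-type argument on each circle $r\mathbb{T}$.

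\textbf{Step 1: pointwise control by the $H_p$-norm.} First I would prove that for every $f \in H_p(\mathbb{D})$ and every $z \in \mathbb{D}$,
\[
|f(z)| \leq \frac{\Vert f \Vert_{H_{p}(\mathbb{D})}}{(1-|z|)^{1/p}}.
\]
This follows from the subharmonicity of $|f|^p$ (apply the mean value inequality on a disc of radius $1-|z|$ around $z$ and then let the radius of integration tend to $1$), or alternatively from a direct Cauchy-integral estimate. In particular, on any compact $K\subseteq \mathbb{D}$ the family $(f_n)$ is uniformly bounded by a constant depending only on $K$ and on $M:=\sup_n \Vert f_n\Vert_{H_p(\mathbb{D})}$.

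\textbf{Step 2: classical Montel.} Since $(f_n)$ is a locally uniformly bounded family of holomorphic functions on $\mathbb{D}$, the classical Montel theorem produces a subsequence $(f_{n_k})$ and a holomorphic $f\colon\mathbb{D}\to\mathbb{C}$ such that $f_{n_k}\to f$ uniformly on compact subsets of $\mathbb{D}$.

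\textbf{Step 3: the limit belongs to $H_p(\mathbb{D})$.} For fixed $0<r<1$, the circle $r\mathbb{T}$ is compact in $\mathbb{D}$, so $f_{n_k}(rw)\to f(rw)$ uniformly in $w\in\mathbb{T}$; passing the uniform limit through the integral,
\[
\int_{\mathbb{T}}|f(rw)|^p\,\mathrm{d}w=\lim_{k}\int_{\mathbb{T}}|f_{n_k}(rw)|^p\,\mathrm{d}w\leq M^p.
\]
Taking the supremum over $0<r<1$ yields $f\in H_p(\mathbb{D})$ with $\Vert f\Vert_{H_p(\mathbb{D})}\leq M$, finishing the proof. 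The only real content of the argument is the pointwise estimate in Step~1; nothing else here is delicate, and there is no genuine obstacle in the one-variable setting. The challenge, which the rest of the paper presumably addresses, is to find appropriate replacements for Steps~1 and~2 when $\mathbb{D}$ is replaced by an infinite-dimensional domain, where neither a uniform pointwise estimate nor a ready-made Montel principle is available.
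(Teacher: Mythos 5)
Your proof is correct, and it is a genuinely different (and shorter) route than the one the paper takes. The paper proves Theorem~\ref{una variable} twice: once, as sketched in the introduction, via Banach--Alaoglu together with the identification of weak$^{*}$ convergence in $H_{p}(\mathbb{D})$ with bounded uniform convergence on compacta; and once in an ``alternative elementary proof'' that deliberately does \emph{not} invoke the classical Montel theorem. That second proof extracts a subsequence converging pointwise on a countable dense set by a diagonal argument, upgrades this to uniform Cauchyness on compacta by a Lipschitz estimate coming from Cauchy's integral formula (inequality \eqref{bux}), and concludes by completeness of the space of holomorphic functions; its growth estimate is the cruder $H_{1}$ bound \eqref{andueza} combined with $\Vert \cdot \Vert_{H_{1}} \leq \Vert \cdot \Vert_{H_{p}}$. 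Your Steps~1--2 compress all of this into a pointwise growth estimate plus a black-box appeal to Montel for normal families, and your Step~3 is essentially the paper's final step. What the paper's longer route buys is exactly the template that survives in Proposition~\ref{prop: Montel para funciones Hp}, where no classical Montel theorem is available and local uniform boundedness does not imply normality, so the diagonal/equicontinuity machinery must be run by hand; your closing remark correctly identifies this. What your route buys is brevity and transparency in one variable.

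One small quantitative caveat on Step~1. The inequality $|f(z)| \leq \Vert f \Vert_{H_{p}(\mathbb{D})} (1-|z|)^{-1/p}$ is true (it is weaker than the sharp bound $(1-|z|^{2})^{-1/p} \Vert f \Vert_{H_{p}(\mathbb{D})}$, obtained for instance by Blaschke factorization and the $H_{2}$ coefficient estimate), but the argument you sketch does not quite deliver it: the sub-mean-value inequality for $|f|^{p}$ over the disc $D(z,\rho)$ with $\rho \uparrow 1-|z|$ only gives
\[
|f(z)|^{p} \leq \frac{1}{\pi \rho^{2}} \int_{D(0,|z|+\rho)} |f|^{p}\, \mathrm{d}A \leq \Big( \tfrac{|z|+\rho}{\rho} \Big)^{2} \Vert f \Vert_{H_{p}(\mathbb{D})}^{p},
\]
i.e.\ exponent $2/p$ rather than $1/p$; to reach $1/p$ one should instead dominate $|f(\rho\,\cdot)|^{p}$ by its Poisson integral and use the bound $P_{z} \leq \frac{1+|z|}{1-|z|}$. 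This is harmless for your proof, since any locally bounded constant suffices for Step~2, but the derivation as written should be adjusted or the exponent weakened.
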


The same argument gives a version of this result for $H_{p}(\mathbb{D}^{n})$, the Hardy spaces on the $n$-dimensional polydisc. Our aim in this note is to obtain an analogue of this Montel-type theorem for 
holomorphic functions on infinite dimensional spaces (Theorem~\ref{verdejo}). We 
approach the proof from a different point of view. We will see how this new perspective also works to give a different  proof of Theorem~\ref{una variable}, which uses only elementary tools of the classical function theory of one complex variable, and therefore  avoiding the Banach-Alaoglu result. Finally, we will see how all this can be used to get a Montel-type theorem for Hardy spaces of Dirichlet series (Theorem~\ref{teo montel series hardy}).

\section{The result}

As we said, our aim is to provide an extension of Theorem~\ref{una variable} to an infinite-dimensional space. The first step is to find a convenient setting. First of all, let us recall that, given a complex Banach space $X$ and an open $U \subseteq X$, a function $f : U \to \mathbb{C}$ is said to be holomorphic (see e.g. \cite[Chapter~15]{defant2018Dirichlet}) if it is  Fr\'echet differentiable at every point of $U$, that is if for every $x \in U$ there exists a functional $x^{*} \in X^{*}$ so that
\[
\lim_{h \to 0} \frac{f(x+h)-f(x)-x^{*}(h)}{\Vert h \Vert} = 0 \,.
\]
Now we need a proper `infinite-dimensional version' of $\mathbb{D}$ and $\mathbb{D}^{n}$. To do this we take any Banach sequence space $X$ (i.e. a subspace of $\mathbb{C}^{\mathbb{N}}$ that contains all the canonical vectors $(e_{n})_{n}$, endowed with a complete norm such that $\Vert e_{n} \Vert =1$ for all $n$ and, if $x=(x_{n})_{n} \in \mathbb{C}^{\mathbb{N}}$ and $y=(y_{n})_{n} \in X$ are sequences so that $\vert x_{n} \vert \leq \vert y_{n} \vert$ for every $n$, then $x \in X$ and $\Vert x \Vert \leq \Vert y \Vert$) and consider the open set
\[
X \cap \mathbb{D}^{\mathbb{N}} = \{ z = (z_{n})_{n} \in X \colon \vert z_{n} \vert < 1 \text{ for all } n \} \,.
\]
For each $1 \leq p < \infty$, the Hardy space $H_{p}(X \cap \mathbb{D}^{\mathbb{N}})$ is defined as consisting of all holomorphic functions $f: X \cap \mathbb{D}^{\mathbb{N}} \to \mathbb{C}$ for which
\begin{equation}\label{norma inf var}
\Vert f \Vert_{H_{p}(X \cap \mathbb{D}^{\mathbb{N}})} =
\sup_{n \in \mathbb{N}} \sup_{0<r<1} \bigg(\int_{\mathbb{T}^n}  |f(rw_1,\cdots,rw_n,0,\cdots)|^{p} \mathrm{d}w_1 \cdots \mathrm{d} w_n\bigg)^{\frac{1}{p}}<\infty \,
\end{equation}
(here again we take on the $n$-dimensional torus $\mathbb{T}^{n}$  the normalised Lebesgue measure).\\
As in the one-dimensional case, these spaces are closely related with harmonic analysis. Let us explain briefly how. On the infinite politorus 
$\mathbb{T}^{\mathbb{N}}$ (which is a compact abelian group) we consider the product of the normalised Lebesgue measure (which is the Haar measure) and (see \cite[Chapter~8]{rudin1962fourier}) 
$H_{p}(\mathbb{T}^{\mathbb{N}})$ the class of analytic functions in $L_{p}(\mathbb{T}^{\mathbb{N}})$ (which corresponds to the classical Hardy space on $\mathbb{T}$). 
It is then well known (see \cite{badefrmase2017} or \cite[Theorem~13.2]{defant2018Dirichlet}) that $H_{p}(\ell_{2} \cap \mathbb{D}^{\mathbb{N}}) = H_{p}(\mathbb{T}^{\mathbb{N}})$
(where $\ell_{2}$ is the space of square-summable complex sequences).
These spaces $H_{p}(\ell_{2} \cap \mathbb{D}^{\mathbb{N}})$ are also closely related with the Hardy spaces of Dirichlet series introduced by Bayart in \cite{bayart2002hardy} (see Section~\ref{sD}). \\

Our aim in this note is to prove the following Montel-type result.
\begin{theorem}\label{verdejo}
Let $X$ be a Banach sequence space with $X \hookrightarrow \ell_{2}$ (with continuous inclusion) and $1 \leq p < \infty$. If
$(f_n)\subseteq H_{p}(X \cap \mathbb{D}^{\mathbb{N}})$ is a bounded sequence, then there exist  $f\in H_{p}(X \cap \mathbb{D}^{\mathbb{N}})$ and a subsequence $(f_{n_k})$ that converges to $f$ uniformly on the compact subsets of $X \cap \mathbb{D}^{\mathbb{N}}$.
\end{theorem}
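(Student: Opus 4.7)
My plan is to reduce the problem to the finite-dimensional polydisc case via a diagonal extraction, and then use the Cole--Gamelin inequality together with the hypothesis $X\hookrightarrow\ell_2$ to lift the resulting convergence back to $X\cap\mathbb{D}^{\mathbb{N}}$.

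Let $M:=\sup_n\|f_n\|_{H_p(X\cap\mathbb{D}^{\mathbb{N}})}$. First, for each $N$ the truncations $f_n^{[N]}(z_1,\ldots,z_N):=f_n(z_1,\ldots,z_N,0,0,\ldots)$ are bounded in $H_p(\mathbb{D}^N)$ by $M$, as is immediate from~\eqref{norma inf var}. Invoking the polydisc analogue of Theorem~\ref{una variable} mentioned in the introduction and extracting diagonally, I obtain a subsequence $(f_{n_k})_k$ and functions $g^{[N]}\in H_p(\mathbb{D}^N)$ (with $\|g^{[N]}\|\leq M$ and $g^{[N+1]}(z_1,\ldots,z_N,0)=g^{[N]}(z_1,\ldots,z_N)$) such that $f_{n_k}^{[N]}\to g^{[N]}$ uniformly on compact subsets of $\mathbb{D}^N$, for every $N$.

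The heart of the proof---and the step I expect to be the main obstacle---is the uniform tail estimate
\[
\sup_{z\in K,\,n\in\mathbb{N}}\bigl|f_n(z)-f_n(z^{[N]})\bigr|\xrightarrow{N\to\infty}0,\qquad z^{[N]}:=(z_1,\ldots,z_N,0,0,\ldots),
\]
for every compact $K\subseteq X\cap\mathbb{D}^{\mathbb{N}}$. Two ingredients are needed: (i) since $X\hookrightarrow\ell_2$, $K$ is compact in $\ell_2$, so the Fr\'echet--Kolmogorov characterization yields $\delta_N:=\sup_{z\in K}\sum_{k>N}|z_k|^2\to 0$; and (ii) the Cole--Gamelin inequality $|h(w)|\leq\|h\|_{H_p}\prod_k(1-|w_k|^2)^{-1/p}$ on $\ell_2\cap\mathbb{D}^{\mathbb{N}}$. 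For fixed $z\in K$ and $n$, consider the slice
\[
\phi(\lambda):=f_n(z_1,\ldots,z_N,\lambda z_{N+1},\lambda z_{N+2},\ldots),
\]
and set $R_N:=1/(2\sqrt{\delta_N})\to\infty$. Then $\phi$ is holomorphic on $\{|\lambda|<1/\sqrt{\delta_N}\}\supseteq\{|\lambda|\leq R_N\}$, and Cole--Gamelin at $w_\lambda=(z_1,\ldots,z_N,\lambda z_{N+1},\ldots)$ factors into a head $\prod_{k\leq N}(1-|z_k|^2)^{-1/p}$---uniformly bounded on $K$ because the full product $\prod_k(1-|z_k|^2)^{-1/p}$ is (by uniform tail decay combined with $\sup_{z\in K}|z_k|<1$ for each single coordinate)---and a tail $\prod_{k>N}(1-|\lambda z_k|^2)^{-1/p}$, bounded by a universal constant because $\sum_{k>N}|\lambda z_k|^2\leq R_N^2\delta_N=1/4$ and $-\log(1-t)\leq 2t$ for $t\leq 1/2$. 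Hence $\sup_{|\lambda|\leq R_N}|\phi|\leq CM$ uniformly in $z,n,N$, and the Schwarz lemma applied to $\mu\mapsto(\phi(R_N\mu)-\phi(0))/(2CM)$ on $\mathbb{D}$ yields
\[
\bigl|f_n(z)-f_n(z^{[N]})\bigr|=|\phi(1)-\phi(0)|\leq 2CM/R_N=4CM\sqrt{\delta_N}\to 0
\]
uniformly in $z\in K$ and $n$.

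With this estimate in hand, the conclusion is a routine $3\varepsilon$-argument: for $z\in K$,
\[
|f_{n_k}(z)-f_{n_j}(z)|\leq|f_{n_k}(z)-f_{n_k}(z^{[N]})|+|f_{n_k}(z^{[N]})-f_{n_j}(z^{[N]})|+|f_{n_j}(z^{[N]})-f_{n_j}(z)|,
\]
the outer terms are small for $N$ large by the tail estimate, and the middle term is small for $k,j$ large by the diagonal convergence applied to the compact projection $\{(z_1,\ldots,z_N):z\in K\}\subseteq\mathbb{D}^N$. Thus $f(z):=\lim_k f_{n_k}(z)$ exists and the convergence is uniform on $K$. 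The limit $f$ is holomorphic (as a locally uniform limit of holomorphic functions), and Fatou applied to~\eqref{norma inf var} yields $\|f\|_{H_p(X\cap\mathbb{D}^{\mathbb{N}})}\leq\liminf_k\|f_{n_k}\|\leq M$.
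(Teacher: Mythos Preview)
Your proof is correct, but it follows a genuinely different strategy from the paper's.

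The paper first reduces Theorem~\ref{verdejo} to the $\ell_2$-case (Proposition~\ref{prop: Montel para funciones Hp}) via the identification $H_p(X\cap\mathbb{D}^{\mathbb{N}})=H_p(\ell_2\cap\mathbb{D}^{\mathbb{N}})$, and then proves that proposition by an Arzel\`a--Ascoli-style argument: diagonal extraction on a countable \emph{dense} subset of $\ell_2\cap\mathbb{D}^{\mathbb{N}}$ (using the pointwise bound of Lemma~\ref{nessun}), followed by the Lipschitz estimate of Lemma~\ref{lem: 2.16} to pass from pointwise convergence on the dense set to uniform Cauchy on compacts. You instead run the diagonal extraction on the tower of \emph{finite-dimensional truncations}, invoking the polydisc version of Theorem~\ref{una variable} as a black box, and then bridge the gap with a quantitative tail estimate obtained from Cole--Gamelin plus the Schwarz lemma on a one-variable slice. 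Both arguments ultimately rest on the same pointwise control (your Cole--Gamelin inequality is the sharper form underlying Lemma~\ref{nessun}), but the mechanism for producing uniform convergence is different: equicontinuity versus finite-dimensional approximation.

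What each buys: the paper's route is self-contained---it deliberately avoids relying on the finite-dimensional Montel theorem (and hence on Banach--Alaoglu), which is one of the stated aims of the paper, and even yields an elementary one-variable proof as a byproduct. Your route is more quantitative (the bound $4CM\sqrt{\delta_N}$ gives an explicit rate) and conceptually clean, but it takes the polydisc case for granted. Two small remarks worth making explicit in a write-up: (i) the point $w_\lambda=z^{[N]}+\lambda(z-z^{[N]})$ stays in $X\cap\mathbb{D}^{\mathbb{N}}$, so no extension of $f_n$ beyond its original domain is needed; (ii) the Schwarz step requires $R_N>1$, i.e.\ $\delta_N<1/4$, which of course holds for all large $N$.
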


If $X \hookrightarrow \ell_{2}$ then \cite[Remark~13.22]{defant2018Dirichlet} shows that $H_{p}(X \cap \mathbb{D}^{\mathbb{N}}) =  H_{p}(\ell_{2} \cap \mathbb{D}^{\mathbb{N}})$ for every 
$1 \leq p < \infty$, and every compact set in $X \cap \mathbb{D}^{\mathbb{N}}$ is also compact in $\ell_{2} \cap \mathbb{D}^{N}$. Then Theorem~\ref{verdejo} follows as a straightforward consequence 
of the following result.

\begin{proposition}\label{prop: Montel para funciones Hp}
Let $1 \leq p < \infty$. If $(f_n)\subseteq H_{p}(\ell_2\cap \mathbb{D}^{\mathbb{N}})$ is a bounded sequence, then there exist  $f\in H_{p}(\ell_2\cap \mathbb{D}^{\mathbb{N}})$ and a subsequence $(f_{n_k})$ that converges to $f$ uniformly on the compact subsets of $\ell_2\cap \mathbb{D}^{\mathbb{N}}$.
\end{proposition}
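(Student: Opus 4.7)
The strategy is to carry out in the infinite-dimensional setting the elementary one-variable argument announced in the introduction: bound the monomial coefficients uniformly, extract a diagonal subsequence along which every coefficient converges, assemble the candidate limit $f$, and obtain uniform convergence on compacts through a direct tail estimate. Through the identification $H_p(\ell_2\cap\mathbb{D}^{\mathbb{N}})=H_p(\mathbb{T}^{\mathbb{N}})$ recalled before the statement, each $f_n$ has a family of monomial (Fourier) coefficients $(c_\alpha(f_n))_{\alpha\in\mathbb{N}_0^{(\mathbb{N})}}$ with $|c_\alpha(f_n)|\le\|f_n\|_1\le\|f_n\|_{H_p}\le M$. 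Since $\mathbb{N}_0^{(\mathbb{N})}$ is countable, Cantor diagonalization produces a subsequence $(f_{n_k})$ and scalars $(c_\alpha)$, $|c_\alpha|\le M$, with $c_\alpha(f_{n_k})\to c_\alpha$ for every $\alpha$.

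To build the limit I work one slab at a time. For each $N$, the section $f_n^{[N]}(z):=f_n(z_1,\ldots,z_N,0,0,\ldots)\in H_p(\mathbb{D}^N)$ has norm at most $M$ by (\ref{norma inf var}) and monomial coefficients $\{c_\alpha(f_n):\mathrm{supp}\,\alpha\subseteq\{1,\ldots,N\}\}$, which converge along $k$. Applying the $N$-variable version of Theorem~\ref{una variable} (obtained by the same method) together with uniqueness of the limit through coefficients, the sequence $f_{n_k}^{[N]}$ converges uniformly on compacts of $\mathbb{D}^N$ to some $g^{[N]}\in H_p(\mathbb{D}^N)$ of norm at most $M$ whose coefficients are the $c_\alpha$. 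The compatibility $g^{[N+1]}|_{z_{N+1}=0}=g^{[N]}$, together with the identity $\|h\|_{H_p(\ell_2\cap\mathbb{D}^{\mathbb{N}})}=\sup_N\|h^{[N]}\|_{H_p(\mathbb{D}^N)}$ implicit in (\ref{norma inf var}) and the identification with $H_p(\mathbb{T}^{\mathbb{N}})$, then assembles the $g^{[N]}$ into a unique $f\in H_p(\ell_2\cap\mathbb{D}^{\mathbb{N}})$ with coefficients $c_\alpha$ and $\|f\|_{H_p}\le M$.

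For a compact $K\subseteq\ell_2\cap\mathbb{D}^{\mathbb{N}}$, decompose
\[
f_{n_k}(z)-f(z)=\bigl[f_{n_k}(z)-f_{n_k}^{[N]}(z)\bigr]+\bigl[f_{n_k}^{[N]}(z)-g^{[N]}(z)\bigr]+\bigl[g^{[N]}(z)-f(z)\bigr].
\]
For each fixed $N$, the middle term vanishes uniformly on $K$ as $k\to\infty$, since the projection $\pi_N(K)\subseteq\mathbb{D}^N$ is compact. Both outer terms have the form $h(z)-h(z_1,\ldots,z_N,0,0,\ldots)$ for some $h$ with $\|h\|_{H_p}\le M$, so the proposition reduces to the uniform tail estimate
\[
\lim_{N\to\infty}\,\sup_{\|h\|_{H_p}\le M}\,\sup_{z\in K}\,\bigl|h(z)-h(z_1,\ldots,z_N,0,0,\ldots)\bigr|=0.
\]

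This tail estimate is the main technical obstacle. Compactness of $K$ in $\ell_2$ delivers the two essential facts $r(K):=\sup_{z\in K,\,n}|z_n|<1$ and $\sup_{z\in K}\sum_{n>N}|z_n|^2\to0$. The natural route is then to integrate a Fr\'echet-derivative bound $\|dh(u)\|_{\ell_2^{\ast}}\lesssim_K\|h\|_{H_p}$ along the segment from $z$ to $(z_1,\ldots,z_N,0,0,\ldots)$, whose $\ell_2$-length tends to zero uniformly on $K$; such a derivative bound should follow from the one-variable Cauchy formula applied in each coordinate at radius $(1+r(K))/2$, combined with a uniform pointwise estimate $|h(u)|\lesssim_K\|h\|_{H_p}$ at the surrounding points --- clean for $p>1$ by H\"older against the Szeg\H{o} kernel on $\mathbb{T}^{\mathbb{N}}$, and requiring additional care (a subharmonicity argument, or first dilating $h$) when $p=1$. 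Establishing this uniform equicontinuity of the unit ball of $H_p$ on $K$ is, in my view, the genuine difficulty.
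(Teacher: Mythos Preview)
Your strategy differs from the paper's. The paper runs a direct Arzel\`a--Ascoli argument: it fixes a countable dense set $\{x_m\}$ in $\ell_2\cap\mathbb{D}^{\mathbb{N}}$, extracts by a diagonal procedure a subsequence converging at every $x_m$, and then uses the two preparatory lemmas stated just after the proposition --- the pointwise bound $|f(z)|\le e^{\|z\|_2^2/(1-\|z\|_\infty^2)}\|f\|_{H_p}$ (Lemma~\ref{nessun}) and the Lipschitz estimate $|f(x)-f(y)|\le\frac{1}{r-s}\|x-y\|\sup_U|f|$ (Lemma~\ref{lem: 2.16}) --- to show this subsequence is uniformly Cauchy on every compact $K$. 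No monomial coefficients, no finite-dimensional slabs, no assembling step.

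Your detour through coefficients and sections $f^{[N]}$ is not wrong, but it is unnecessary: the tail estimate you isolate as the ``genuine difficulty'' is exactly uniform equicontinuity of the $H_p$-unit ball on $K$, and once you have that (together with the pointwise bound), the dense-set argument gives the result immediately, rendering your steps 1--3 redundant. The two cited lemmas deliver precisely this equicontinuity, and Lemma~\ref{nessun} covers all $1\le p<\infty$ uniformly --- no separate treatment of $p=1$ is needed.

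There is one genuine wrinkle in your sketch of the tail estimate. Applying the one-variable Cauchy formula \emph{coordinate by coordinate} at radius $(1+r(K))/2$ yields only $\sup_j|\partial_j h(u)|\lesssim_K\|h\|_{H_p}$; this does not give $\|dh(u)\|_{\ell_2^*}=\bigl(\sum_j|\partial_j h(u)|^2\bigr)^{1/2}<\infty$, which is what integration along the segment requires. The remedy is to apply the one-variable Cauchy formula to the slice $\lambda\mapsto h\bigl(x+\lambda v/\|v\|_2\bigr)$ in the \emph{direction} of the displacement $v$, obtaining $|h(x+v)-h(x)|\lesssim\|v\|_2\sup|h|$ in one stroke --- this is precisely Lemma~\ref{lem: 2.16}. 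With that correction your tail estimate goes through, and your approach becomes a valid, if longer, alternative to the paper's.
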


We prove this in several steps. First we use a diagonal procedure to find a subsequence that converges pointwise on some dense subset of $\ell_{2} \cap \mathbb{D}^{\mathbb{N}}$. In a second step we see that this sequence is uniformly Cauchy on the compact subsets of $\ell_{2} \cap \mathbb{D}^{\mathbb{N}}$ and, hence, converges. Finally we show that the limit function belongs to the Hardy space.
We need two results that we state now without proof. The first one is \cite[Lemma~2.16]{defant2018Dirichlet}, and provides a  locally Lipschitz condition  on compact subsets.

\begin{lemma} \label{lem: 2.16}
Let $X$ be a Banach space, $U\subseteq X$ an open set, and $K\subseteq U$ a compact set. If $f:U\rightarrow\mathbb{C}$ is holomorphic and bounded, then for every $0<s<r=\dist(X \setminus U,K)$, all $x\in K$ and $y\in\overline{B(x,s)}$
\[
|f(x)-f(y)|\leq \frac{1}{r-s}\Vert x-y\Vert \sup_{z \in U} \vert f(z) \vert.
\]
\end{lemma}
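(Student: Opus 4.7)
The strategy is to reduce the infinite-dimensional Lipschitz estimate to a one-variable statement by restricting $f$ to the complex line through $x$ in the direction of $y-x$. Assume $y\neq x$ (otherwise both sides vanish) and set $\delta = \|y-x\| \leq s$. The first task is to identify the correct planar domain on which the associated one-variable function
\[
g(\lambda) = f\bigl(x + \lambda(y-x)\bigr)
\]
is defined and holomorphic.

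Because $x\in K$ and $\dist(X\setminus U, K)=r$, the open ball $B(x,r)$ lies inside $U$, so the affine map $\lambda\mapsto x+\lambda(y-x)$ carries the disc $\mathbb{D}(0, r/\delta)$ into $U$. Hence $g$ is holomorphic on $\mathbb{D}(0, r/\delta)$ as the composition of a $\mathbb{C}$-affine map with the holomorphic function $f$, and it satisfies $|g|\leq M := \sup_{z\in U}|f(z)|$ there. Note that $r/\delta > 1$ since $\delta\leq s < r$, so the points $0$ and $1$ both lie in the domain of $g$.

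Next I would apply Cauchy's integral formula on a circle $|\zeta|=\rho$ with $1 < \rho < r/\delta$. Writing out the formulas for $g(1)$ and $g(0)$ and subtracting, one obtains
\[
g(1)-g(0) = \frac{1}{2\pi i}\int_{|\zeta|=\rho}\frac{g(\zeta)}{\zeta(\zeta-1)}\,d\zeta,
\]
whence the standard length-times-sup estimate yields $|g(1)-g(0)| \leq M/(\rho-1)$. Letting $\rho\uparrow r/\delta$ gives
\[
|f(x)-f(y)| = |g(1)-g(0)| \leq \frac{M\,\|y-x\|}{r-\|y-x\|},
\]
and replacing $\|y-x\|$ by $s$ in the denominator (using $\|y-x\|\leq s$) produces the asserted bound with constant $1/(r-s)$.

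The argument is essentially routine once the one-variable reduction is set up; the only point that genuinely uses the hypothesis is the verification that $\{x+\lambda(y-x):|\lambda| < r/\delta\}\subseteq U$, which rests on interpreting $\dist(X\setminus U, K)=r$ as saying that $B(x,r)\subseteq U$ for every $x\in K$. No real obstacle is expected; an equally valid alternative would be to rescale $g$ to the unit disc and invoke the Schwarz lemma, but Cauchy's formula gives the cleanest path to the precise constant.
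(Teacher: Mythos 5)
Your proof is correct. Note that the paper itself does not prove this lemma at all --- it is quoted verbatim from \cite[Lemma~2.16]{defant2018Dirichlet} and explicitly stated ``without proof'' --- so there is no in-paper argument to compare against; your reduction to the slice function $g(\lambda)=f(x+\lambda(y-x))$ on the disc $\mathbb{D}(0,r/\delta)$, followed by the Cauchy-formula estimate $|g(1)-g(0)|\leq M/(\rho-1)$ and the limit $\rho\uparrow r/\delta$, is the standard route and yields exactly the stated constant $1/(r-s)$ after using $\Vert x-y\Vert\leq s$. The only hypotheses you rely on (that $B(x,r)\subseteq U$ for $x\in K$, and that Fr\'echet holomorphy of $f$ makes $g$ holomorphic in one variable) are both immediate, so the argument is complete.
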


%\begin{lemma}[\cite{defant2018Dirichlet}, Theorem~15.48] \label{weierstrass}
%Let $\{ f_{n} : U \to \mathbb{C} \}_{n}$ be a sequence of holomorphic functions that converge to $f:U \to \mathbb{C}$ uniformly on the compact subsets  of $U$. Then $f$ is holomorphic.
%Given $U$, an open subset of a Banach space, the space $H(U)$ of all holomorphic functions $f:U \to \mathbb{C}$ endowed with the topology of uniform convergence over the compact subsets of $U$ is complete.
%\end{lemma}

The second lemma that we need (the proof of which can be found in  \cite[Corollary~13.20 and (13.25)]{defant2018Dirichlet}), allows us to estimate $\vert f(z) \vert$ for  $z \in \ell_2\cap \mathbb{D}^{\mathbb{N}}$ in terms of  $\Vert z\Vert_2=(\sum_{j=1}^{\infty} \vert z_j \vert^2)^{1/2}$ and $\Vert z\Vert_{\infty}:=\sup_{j \in \mathbb{N}}\vert z_j \vert$.

\begin{lemma}  \label{nessun}
If $1 \leq p < \infty$, then
\begin{equation*}
|f(z)| 
\leq e^{\frac{\Vert z\Vert_2^2}{1-\Vert z\Vert_{\infty}^2}} \Vert f \Vert_{H_{p} (\ell_{2} \cap \mathbb{D}^{\mathbb{N}}) } \,.
\end{equation*}
for every $f \in H_{p} (\ell_2\cap \mathbb{D}^{\mathbb{N}})$ and $z \in \ell_2\cap \mathbb{D}^{\mathbb{N}}$.
\end{lemma}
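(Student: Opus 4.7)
The plan is to reduce to the polydisc, apply the sharp point-evaluation bound in $H_p(\mathbb{D}^n)$, and pass to the limit. Concretely, fix $z \in \ell_2 \cap \mathbb{D}^{\mathbb{N}}$ and, for each $n$, set $z^{(n)} = (z_1, \ldots, z_n, 0, 0, \ldots)$ and $g_n(w_1, \ldots, w_n) := f(w_1, \ldots, w_n, 0, 0, \ldots)$. By the very definition of the Hardy norm, $g_n \in H_p(\mathbb{D}^n)$ with $\|g_n\|_{H_p(\mathbb{D}^n)} \leq \|f\|_{H_p(\ell_2 \cap \mathbb{D}^{\mathbb{N}})}$, and $f(z^{(n)}) = g_n(z_1, \ldots, z_n)$. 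Since $f$ is (Fr\'echet) continuous and $z^{(n)} \to z$ in $\ell_2$, we have $f(z^{(n)}) \to f(z)$; moreover $\|z^{(n)}\|_2 \leq \|z\|_2$ and $\|z^{(n)}\|_\infty \leq \|z\|_\infty$. So the statement reduces to proving, for every $g \in H_p(\mathbb{D}^n)$ and $w \in \mathbb{D}^n$, the finite-dimensional inequality
\[
|g(w)| \leq e^{\|w\|_2^2/(1-\|w\|_\infty^2)} \|g\|_{H_p(\mathbb{D}^n)}.
\]

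The heart of the argument is the sharp product bound
\[
|g(w)| \leq \prod_{j=1}^n (1-|w_j|^2)^{-1/p} \|g\|_{H_p(\mathbb{D}^n)}.
\]
The one-variable version is classical via the Riesz inner-outer factorization $h = B \cdot u$ on $\mathbb{D}$ (with $|B| \leq 1$, $\|u\|_p = \|h\|_p$, and the branch $u^{p/2} \in H_2(\mathbb{D})$ satisfying $\|u^{p/2}\|_2^2 = \|h\|_p^p$): the $H_2$ reproducing-kernel estimate $|u^{p/2}(w)| \leq (1-|w|^2)^{-1/2} \|u^{p/2}\|_2$ yields $|h(w)| \leq |u(w)| \leq (1-|w|^2)^{-1/p} \|h\|_p$. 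For the polydisc I would iterate coordinate by coordinate: for fixed $w_1 \in \mathbb{D}$, the slice $(z_2, \ldots, z_n) \mapsto g(w_1, z_2, \ldots, z_n)$ lies in $H_p(\mathbb{D}^{n-1})$ with norm at most $(1-|w_1|^2)^{-1/p} \|g\|_{H_p(\mathbb{D}^n)}$ (Fubini plus the one-variable bound applied in the first coordinate), and induction gives the product.

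Finally, using the elementary inequality $-\log(1-t) \leq t/(1-t)$ valid for $0 \leq t < 1$,
\[
\frac{1}{p} \sum_{j=1}^n \bigl(-\log(1-|w_j|^2)\bigr) \leq \frac{1}{p} \sum_{j=1}^n \frac{|w_j|^2}{1-|w_j|^2} \leq \frac{\|w\|_2^2}{p(1-\|w\|_\infty^2)} \leq \frac{\|w\|_2^2}{1-\|w\|_\infty^2},
\]
the last step using $p \geq 1$. Exponentiating gives the desired finite-dimensional inequality; applying it to $g_n$ at $(z_1, \ldots, z_n)$ and letting $n \to \infty$ completes the proof.

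The main obstacle is the polydisc product bound: the one-variable piece via Riesz factorization is standard, but the coordinate-wise iteration needs some care to control the $H_p(\mathbb{D}^{n-1})$-norm of the slice in terms of $\|g\|_{H_p(\mathbb{D}^n)}$ and the weight $(1-|w_1|^2)^{-1/p}$. The rest --- continuity of $f$, convergence of the truncations in $\ell_2$, and the log inequality --- is routine.
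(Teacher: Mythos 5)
Your proposal is correct, and it supplies a proof where the paper deliberately gives none: the lemma is stated ``without proof'' with a pointer to \cite[Corollary~13.20 and (13.25)]{defant2018Dirichlet}, and your two ingredients are exactly the two facts hiding behind that citation --- the polydisc evaluation bound $|g(w)|\leq \prod_{j=1}^{n}(1-|w_j|^2)^{-1/p}\Vert g\Vert_{H_p(\mathbb{D}^n)}$ is Corollary~13.20, and the passage from the product to $e^{\Vert w\Vert_2^2/(1-\Vert w\Vert_\infty^2)}$ via $-\log(1-t)\leq t/(1-t)$ and $p\geq 1$ is (13.25). The reduction to finite dimensions (truncation, continuity of $f$, monotonicity of the bound in $\Vert\cdot\Vert_2$ and $\Vert\cdot\Vert_\infty$) and the one-variable Riesz-factorization step are fine as written. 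The one place you rightly flag as needing care --- the slice estimate $\Vert g(w_1,\cdot)\Vert_{H_p(\mathbb{D}^{n-1})}\leq (1-|w_1|^2)^{-1/p}\Vert g\Vert_{H_p(\mathbb{D}^n)}$ --- does go through: apply the one-variable bound on the disc of radius $s$ to $\zeta\mapsto g(s\zeta,\rho v)$ for each fixed $v\in\mathbb{T}^{n-1}$, integrate in $v$, and observe that the mixed-radius mean $\int_{\mathbb{T}^n}|g(su_1,\rho v)|^p\,\mathrm{d}u_1\,\mathrm{d}v$ is dominated by the diagonal supremum defining $\Vert g\Vert_{H_p(\mathbb{D}^n)}$ because the integral means are separately nondecreasing in each radius ($|g|^p$ being subharmonic in each variable); then let $s\to 1$ and take the supremum over $\rho$. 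With that observation spelled out, the argument is complete.
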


\begin{proof}[Proof of Proposition~\ref{prop: Montel para funciones Hp}]
We begin by finding a subsequence $(f_{n_k})$ of $f_{n}$ that converges pointwise on some dense subset of $\ell_2\cap \mathbb{D}^{\mathbb{N}}$. Fix, then, some $\{x_m:\;m\in\mathbb{N}\}$ dense in $\ell_2\cap \mathbb{D}^{\mathbb{N}}$. By Lemma~\ref{nessun}, $(f_{n}(x_1))_{n}$ is bounded in $\mathbb{C}$ and we can find a subsequence $f_{n(k,1)}$ and $c_1\in\mathbb{C}$, so that 
\[
c_1=\lim_{k\to 1} f_{n(k,1)}(x_1) \, \text{ and } \, |f_{n(k,1)}(x_1)-c_1|<\frac{1}{k} \, \text{ for all } \, k\in\mathbb{N}\,.
\] 
Suppose that we have found subsequences $\big(n(k,1) \big)_{k=1}^{\infty}
, \ldots , \big(n(k,m) \big)_{k=1}^{\infty}$ in such a way that, for each $1 \leq j \leq m$,  $\big(n(k,j)\big)_{k=1}^{\infty}$ is a subsequence of $\big(n(k,j-1)\big)_{k=1}^{\infty}$ and
\begin{equation} \label{zambujo}
c_j=\lim_{k\to \infty} f_{n(k,j)}(x_{j}) \, \text{ and } \, |f_{n(k,j)}(x_j)-c_{j}|<\frac{1}{k} \, \text{ for all } k\in\mathbb{N} \,.
\end{equation}
Again, $f_{n(k,m)}(x_{m+1})$ is bounded in $\mathbb{C}$ and then there is a subsequence $f_{n(k,m+1)}$ that converges to a certain $c_{m+1}\in\mathbb{C}$, with $|f_{n(k,m+1)}(x_{m+1})-c_{m+1}|<\frac{1}{k}$ for all 
$k\in\mathbb{N}$. In this way we define $(c_{j})_{j} \subseteq \mathbb{C}$ and $(n(k,j))_{k,j} \subseteq \mathbb{N}$ in such a way that  $\big(n(k,j)\big)_{k=1}^{\infty}$ is a subsequence of 
$\big(n(k,j-1)\big)_{k=1}^{\infty}$ and \eqref{zambujo} holds for every $j$. We define now $n_{k} =n(k,k)$ and observe that for each fixed $m$, if $k \geq m$ then
\begin{equation} \label{zimmermann}
|f_{n_{k}}(x_m)-c_m|=|f_{n(k,k)}(x_m)-c_m|<\frac{1}{k} \,.
\end{equation}
Thus $\big(f_{n_k}(x_m)\big)_{k=1}^{\infty}$ converges to $c_{m}$ for every $m\in\mathbb{N}$, and we have found the subsequence that we were looking for. \\
The second step of the proof is to see that $(f_{n_{k}})_{k}$ converges to some $f \in H_{p} (\ell_{2} \cap B_{c_{0}})$ uniformly over the compact sets. To do this take some compact $K\subseteq \ell_2\cap \mathbb{D}^{\mathbb{N}}$
and set $0<r=\dist  \big(\ell_2 \setminus (\ell_2\cap \mathbb{D}^{\mathbb{N}} ),K \big)$. 
Fix now $0 \neq z \in K$  and fix $j$ so that $z_{j} \neq 0$. Define 
$y_{i} = z_{i}$  if $i \neq j$ and $y_{j} = \frac{z_j}{\vert z_{j} \vert}$; then clearly $y = (y_{i})_{i} \in \ell_2 \setminus (\ell_2\cap \mathbb{D}^{\mathbb{N}} )$ and, therefore,
\[
r \leq \Vert y-z \Vert_2 = \bigg| z_j-\frac{z_j}{|z_j|} \bigg|= \bigg| \frac{z_j|z_j|-z_j}{|z_j|} \bigg|
=\big\vert \vert z_j \vert - 1 \big\vert=1-|z_j| <1 \,.
\]
This shows that $\Vert z \Vert_{\infty} \leq 1-r$  for every $z \in K$ (since the inequality obviously holds also for $z=0$). Consider now $B=\bigcup_{z\in K} B(z,\frac{r}{2})$. Given $x \in B$ we can find $z \in K$ so that $\Vert x - z \Vert_{\infty} \leq \Vert x-z \Vert_2<\frac{r}{2}$ and, then,
\[
\Vert x \Vert_{\infty}<\Vert z\Vert_{\infty}+\frac{r}{2}\leq1-\frac{r}{2}<1 \,.
\]
With this, we can find a constant $\lambda_{B} >0$ so that $\Vert x \Vert_{2} \leq \lambda_{B}$ for every $x \in B$ and, denoting $M= \sup_{n} \Vert f_{n} \Vert_{H_{p}(\ell_{2} \cap B_{c_{0}})}$, this and Lemma~\ref{nessun} yield
%\[
%|f_{n_{k}}(x)| \leq
%e^{\frac{\Vert z\Vert_2^2}{1-\Vert z\Vert_{\infty}^2}}\Vert f_{n_{k}}\Vert_{\mathcal{H}^{p}(\ell_{2} \cap B_{c_{0}})}
		%\leq e^{\frac{{\lambda}_{B}}{1-\frac{r}{2}}}M
		%\]
		%\pablo{¿la linea anterior no debería ser lo siguiente?
\[
|f_{n_{k}}(x)| \leq
e^{\frac{\Vert x\Vert_2^2}{1-\Vert x\Vert_{\infty}^2}}\Vert f_{n_{k}}\Vert_{\mathcal{H}^{p}(\ell_{2} \cap \mathbb D^{\mathbb N})}
\leq e^{\frac{{\lambda}_{B}^{2}}{r-\frac{r^{2}}{4}}}M
\]%}
for every $x \in B$. This shows that each $f_{n_{k}}$ is bounded on $B$. But $B$ is open and contains $K$ so, if we take $r_{B}=\dist (\ell_2 \setminus B,K )>0$ and fix $0< \varepsilon <\frac{r_{B}}{2}$, from Lemma~\ref{lem: 2.16} we know that 
\[
|f_{n_{k}}(z)-f_{n_{k}}(y)|
\leq\frac{1}{r_{B}- \varepsilon} \Vert z-y\Vert\ \sup_{x \in B} \vert f_{n_{k}} (x) \vert
\leq\frac{1}{r_{B}- \varepsilon}\Vert z-y\Vert e^{\frac{{\lambda}_{B}^{2}}{r-\frac{r^{2}}{4}}} M
\]
for every $z\in{K}$ and all $y\in\overline{B(x, \varepsilon)}$. Since $\{B(z,\varepsilon):\;z\in{K}\}$ is a open cover of $K$, there exist $z_1,\cdots,z_N$ in $K$ such that 
\[
K\subseteq{B(z_1,\varepsilon)\cup\cdots\cup{B(z_{N},\varepsilon)}}\,
\]
and, by the denseness of $\{x_{m}:m\in\mathbb{N}\}$, for each $i = 1, \ldots ,N$ we can find $m_{i} \in \mathbb{N}$ so that $x_{m_{i}} \in B(z_{i}, \varepsilon)$. In this way, for every $z \in K$ there are 
$z_{j}$ and $x_{m_j}$, such that $z,x_{m_j}\in{\overline{B(z_j,\varepsilon)}}$ and, then,
\begin{multline*}
{}|f_{n_k}(z)-f_{n_k}(x_{m_j})| \leq|f_{n_k}(z)-f_{n_k}(z_j)|+|f_{n_k}(z_j)-f_{n_k}(x_{m_j})|\\
\leq 2M e^{\frac{{\lambda}_{B}^{2}}{r-\frac{r^{2}}{4}}} \left(\frac{\Vert z-z_j\Vert}{r_{B}-\varepsilon}+\frac{\Vert x_{m_j}-z_j\Vert}{r_{B}-\varepsilon}\right)
\leq\frac{8 e^{\frac{{\lambda}_{B}^{2}}{r-\frac{r^{2}}{4}}} M}{r_{B}}\varepsilon \,.
\end{multline*}
Now, if $k\geq l \geq \max\{m_1,\cdots,m_{N},\frac{1}{\varepsilon}\}$, using \eqref{zimmermann} we get
\begin{multline*}
|f_{n_k}(z)-f_{n_l}(z)| \\ \leq|f_{n_k}(z)-f_{n_k}(x_{m_j})|+|f_{n_k}(x_{m_j})-c_{m_j}|+|f_{n_l}(x_{m_{j}})-c_{m_j}|
+|f_{n_l}(z)-f_{n_l}(x_{m_j}) | \\
<\frac{2}{l}+\frac{8 e^{\frac{{\lambda}_{B}^{2}}{r-\frac{r^{2}}{4}}} M}{r_{B}}\varepsilon
\end{multline*}
This shows that the sequence $f_{n_k}$ is uniformly Cauchy on $K$ and, since $K$ was arbitrary the sequence is uniformly Cauchy on the compact subsets of $\ell_2\cap \mathbb{D}^{\mathbb{N}}$. Then, since the space of holomorphic functions endowed with the topology of uniform convergence on compact sets is complete (see e.g. \cite[Theorem~15.48]{defant2018Dirichlet}),  it converges to some holomorphic $f : \ell_2\cap \mathbb{D}^{\mathbb{N}} \to \mathbb{C}$.\\
To complete the proof it is only left to check that in fact $f\in H_{p}(\ell_2\cap \mathbb{D}^{\mathbb{N}})$. Let $M = \sup_{n} \Vert f_{n} \Vert_{H_{p}(\ell_{2} \cap \mathbb{D}^{\mathbb{N}})} >0$  and fix $n \in \mathbb{N}$ and $0<r<1$. Since $r \mathbb{T}^{n}\times\{0\}\subseteq\ell_2\cap \mathbb{D}^{\mathbb{N}}$ is compact,  there is $k= k(n,r) \in\mathbb{N}$ such that $|f(z)-f_{n_k}(z)|\leq\frac{1}{2}$  for every $z \in r \mathbb{T}^{n}\times\{0\}$ and, therefore
\begin{align*}
\bigg(\int_{\mathbb{T}^{n}} |f(rw_1,&\dots,rw_{n},0,\dots)|^{p} \mathrm{d}w_1\cdots\mathrm{d}w_{n}\bigg)^{\frac{1}{p}}\\
\leq& \bigg( \int_{\mathbb{T}^{n}} |f(rw_1,\dots,rw_{n},0,\dots)-f_{n_{k}}(rw_1,\dots,rw_{n},0,\dots)|^{p} \mathrm{d}w_1\cdots\mathrm{d}w_{n}\bigg)^{\frac{1}{p}}\\
&+\bigg(\int_{\mathbb{T}^{n}} |f_{n_{k}}(rw_1,\cdots,rw_{n},0,\cdots)|^{p} \mathrm{d}w_1\cdots\mathrm{d}w_{n}\bigg)^{\frac{1}{p}}\\
\leq&\bigg(\int_{\mathbb{T}^{n}} \frac{1}{2^{p}} \mathrm{d}w\bigg)^{\frac{1}{p}}
+
\Vert f_{n_{k}} \Vert_{H_{p}} 
\leq \frac{1}{2}+M.
\end{align*}
Then we have that $$\sup_{n \in \mathbb{N}} \sup_{0<r<1} \bigg(\int_{\mathbb{T}^n}  |f(rw_1,\cdots,rw_n,0,\cdots)|^{p} \mathrm{d}w_1 \cdots \mathrm{d} w_n\bigg)^{\frac{1}{p}} \leq \frac{1}{2} +M,$$
and shows that, in fact, $f \in H_{p}(\ell_2\cap \mathbb{D}^{\mathbb{N}})$.
\end{proof}

Theorem~\ref{una variable} can be deduced from Theorem~\ref{prop: Montel para funciones Hp}. Let us briefly explain how. First of all, it is pretty straightforward to see that  $H_{p}(\mathbb{D})$ can be isometrically embedded in $H_{p}(\ell_2\cap \mathbb{D}^{\mathbb{N}})$. On the other hand, 
every holomorphic function $f :\ell_2\cap \mathbb{D}^{\mathbb{N}} \to \mathbb{C}$ defines a family of coefficients in the following way (see \cite{DeMaPr09} or \cite[Chapter~15]{defant2018Dirichlet}). For each $n$ and $\alpha \in \mathbb{N}_{0}^{n}$ (where $\mathbb{N}_{0} = \mathbb{N} \cup \{0\}$) the $\alpha$-th coefficient of $f$ is defined as
\begin{equation} \label{c-alfa}
c_{\alpha}(f):=\frac{1}{(2\pi{i})^{n}} \int\limits_{|\lambda_{1}|=r}\cdots\int\limits_{|\lambda_{n}|=r} \frac{f(\lambda_1,\cdots,\lambda_N, 0,\dots)}{\lambda_1^{\alpha_1+1}\cdots\lambda_{n}^{\alpha_{n}+1}} \mathrm{d}\lambda_{n}\cdots\mathrm{d}\lambda_1\,.
\end{equation}
Then, denoting $\mathbb{N}_{0}^{(\mathbb{N})} = \bigcup_{n \in \mathbb{N}} \mathbb{N}_{0}^{n} \times \{0\}$ we have a unique family of coefficients $\big( c_{\alpha}(f) \big)_{\alpha \in \mathbb{N}_{0}^{(\mathbb{N})}}$ associated to $f$. \\
If we start with a bounded sequence $(f_{n})_{n}$ in $H_{p}(\mathbb{D})$ we may look at it as belonging to $H_{p}(\ell_2\cap \mathbb{D}^{\mathbb{N}})$, and Theorem~\ref{prop: Montel para funciones Hp} gives us a subsequence $(f_{n_{k}})_{k}$ converging 
uniformly on the compacts (of $\ell_2\cap \mathbb{D}^{\mathbb{N}}$) to some $f \in H_{p}(\ell_2\cap \mathbb{D}^{\mathbb{N}})$. But \eqref{c-alfa} and the uniform convergence on compacts show that 
$c_{\alpha}(f_{k})$ converges to $c_{\alpha}(f)$. Hence, these coefficients are $0$ for every $\alpha$ for which there is some $N \geq 2$ with $\alpha_{N} \neq 0$. Then it is also easy to see (using e.g. 
\cite[Theorem~13.2]{defant2018Dirichlet}) that in fact $f \in H_{p}(\mathbb{D})$ and, since every compact set in $\mathbb{D}$ is compact in $\ell_2\cap \mathbb{D}^{\mathbb{N}}$ the argument is completed.\\

This is, however, too long a way to prove Theorem~\ref{una variable} (go to an infinite dimensional space in order to come back to dimension 1). As a matter of fact the strategy to prove Proposition~\ref{prop: Montel para funciones Hp} 
can be adapted (and simplified) to give a direct proof of the one-dimensional result. The replacements for Lemmas~\ref{lem: 2.16} and~\ref{nessun} are direct consequences of Cauchy's Integral Formula (as in, e.g. 
\cite[Theorem~5.4]{conway1978functions}). If $f \in H_{1} (\mathbb{D})$ and $z \in \mathbb{D}$ we may take any $\vert z \vert < r <1$ to have
\[
\vert f(z) \vert =
\bigg\vert \frac{1}{2 \pi i} \int_{\vert \zeta \vert = r} \frac{f(\zeta)}{\zeta - z} \mathrm{d}\zeta \bigg\vert
\leq  \frac{r}{r - \vert z \vert} \int_{\mathbb{T}} \vert f(rw) \vert \mathrm{d}w
\leq  \frac{r}{r - \vert z \vert} \Vert f \Vert_{H_{1} (\mathbb{D})} \,.
\]Since this holds for every $r$ we have
\begin{equation} \label{andueza}
\vert f (z) \vert \leq \frac{1}{1- \vert z \vert}  \Vert f \Vert_{H_{1}(\mathbb{D})}
\end{equation}
for every $f \in H_{1}(\mathbb{D})$ and $z \in \mathbb{D}$. \\
Suppose now that $f : \mathbb{D} \to \mathbb{C}$ is holomorphic and $z_{1}, z_{2} \in s\mathbb{D}$ for some $0 < s <1$. Using again Cauchy's Integral Formula we have
\begin{equation} \label{bux}
\vert f(z_{1}) - f(z_{2}) \vert = 
\frac{1}{2 \pi} \bigg\vert \int_{\vert \zeta \vert = s} f(\zeta) \frac{z_{1} - z_{2}}{(\zeta - z_{1}) (\zeta - z_{2})} \mathrm{d} \zeta \bigg\vert
\leq \vert z_{1} - z_{2} \vert\frac{s}{(s -\vert  z_{1} \vert) (s - \vert z_{2} \vert)} \sup_{\vert \zeta \vert =s} \vert f(\zeta) \vert \,.
\end{equation}

We now present an alternative and elementary proof of Theorem~\ref{una variable}.

\begin{proof}[Alternative proof of Theorem~\ref{una variable}]
We begin by taking $\{x_{m}  \colon m \in \mathbb{N} \}$ a countable dense subset of $\mathbb{D}$. By \eqref{andueza}, for each fixed $m$, the sequence $(f_{n}(x_{m}))_{n}$ is bounded in $\mathbb{C}$  (recall that $\Vert \cdot \Vert_{H_{1}} \leq \Vert \cdot \Vert_{H_{p}}$). 
With exactly the same diagonal procedure as in the proof of Proposition~\ref{prop: Montel para funciones Hp} we can define a subsequence $(f_{n_{k}})_{k}$ and $\{ c_{m}  \colon m \in \mathbb{N} \}$ so that 
$f_{n_{k}}(x_{m})  \to c_{m}$ as $k \to \infty$ for every $m$.The key point now is to see that the sequence $(f_{n_{k}})_{k}$ is uniformly Cauchy on every compact set $K \subseteq \mathbb{D}$. It suffices to take 
$K = \overline{r \mathbb{D}}$ for $0<r<1$. Fix, then, such an $r$ and $0< \varepsilon < 1-r$. By the density of the set $\{x_{m}\}_{m}$ and compactness we may find $x_{1}, \ldots, x_{N} \in \overline{r \mathbb{D}}$ 
(belonging to the dense set, these may not be the corresponding to $m=1, \ldots, N$ but we prefer to keep the notation as neat as possible) so that
$\overline{r \mathbb{D}} \subseteq \mathbb{D} (x_{1} , \varepsilon) \cup \cdots \cup \mathbb{D} (x_{N} , \varepsilon)$
(where $\mathbb{D} (x , \varepsilon)$ denotes the open disc in $\mathbb{C}$ centred in $x$ and with radius $\varepsilon$). For each $j=1, \ldots, N$ the sequence $(f_{n_{k}}(x_{j}))_{k}$ is Cauchy, so we can find $k_{0}$ so that $\vert f_{n_{k_{1}}} (x_{j}) - f_{n_{k_{2}}} (x_{j}) \vert < \varepsilon$ for every $k_{1}, k_{2} \geq k_{0}$ and all $j=1 , \ldots , N$. Given $z \in \overline{r \mathbb{D}}$, there is some $j$ so that $\vert z - x_{j} \vert < \varepsilon$. Then, taking any $r<s<1$, \eqref{bux} and \eqref{andueza} give
\[
\vert f_{n_{k}}(z) -  f_{n_{k}}(x_{j}) \vert \leq \varepsilon \frac{s}{(s-r)^{2}} \frac{1}{1-r} \Vert f_{n_{k}} \Vert_{H_{p} (\mathbb{D})} \,.
\]
We may choose $s = r +\frac{1-r}{2}$ and, then $\frac{s}{(s-r)^{2}} \frac{1}{1-r} = \frac{2(r+1)}{(1-r)^{3}} = K_{r}$. Denoting $M =  \sup_{k} \Vert f_{n_{k}} \Vert_{H_{p} (\mathbb{D})}$ we have $\vert f_{n_{k_{1}}}(z) -  f_{n_{k_{2}}}(z) \vert \leq \varepsilon ( 1 + 2K_{r}M)$ for every $z \in \overline{r \mathbb{D}}$ and $k_{1}, k_{2} \geq k_{0}$. This shows that the sequence $(f_{n_{k}})_{k}$ is uniformly Cauchy on every compact subset of $\mathbb{D}$. Since the space of holomorphic functions on
$\mathbb{D}$ (with the topology of uniform convergence on compact sets) is complete \cite[Corollary~2.3]{conway1978functions}, it converges to some holomorphic function $f : \mathbb{D} \to \mathbb{C}$. Exactly the same argument as in the proof of Proposition~\ref{prop: Montel para funciones Hp} shows that $f$ belongs to $H_{p}(\mathbb{D})$.
\end{proof}

\section{An application to Dirichlet series} \label{sD}

Dirichlet series are formal series of the form $\sum a_{n} n^{-s}$, where $a_{n} \in \mathbb{C}$ and $s$ is a complex variable. It is well known that Dirichlet series converge on half-plains and there define holomorphic functions. The space $\mathcal{H}_{\infty}$ consists of all Dirichlet series that converge on 
$[\re s >0]$ and there define a bounded holomorphic function, which the norm defined as the supremum on
$[\re s >0]$ is a Banach space. The reader is referred to \cite{defant2018Dirichlet,queffelec2013diophantine} for a complete account on this theory. \\
Given a sequence of Dirichlet series in $\mathcal{H}_{\infty}$ one can look at them as functions on the right half-plane and try to  apply Montel's theorem. This would have two problems: one would get a subsequence that 
converges uniformly on the compacts (and not on half-planes, which are the natural setting for Dirichlet series), and, moreover, it would converge to some holomorphic function on $[\re s >0]$ which might or might not 
be represented by a Dirichlet series. So one could say that the classical Montel's theorem is useless within the context of Dirichlet series. Bayart overcame this problem in  \cite[Lemma~18]{bayart2002hardy}, proving a Montel-type theorem for Dirichlet series: \textit{let $\{ \sum a_{n}^{(N)} n^{-s}\}_{N}$ be a bounded sequence in $\mathcal{H}_{\infty}$; then it has a subsequence that converges to a Dirichlet series $\sum a_{n} n^{-s}$ in $\mathcal{H}_{\infty}$ uniformly on $[\re s > \sigma]$ for every $\sigma >0$.} This has several interesting applications within the functional-analytic theory of Dirichlet series. \\
Let us look at this result from a slightly different point of view. 
If $\sum a_{n} n^{-s}$ is a Dirichlet series in $\mathcal{H}_{\infty}$ and $D(s)$ is the holomorphic function that it defines on $[\re s >0]$, then for $\varepsilon > 0$ we have
\[
\sup_{\re s > \varepsilon} \Big\vert \sum_{n=1}^{\infty} a_{n} \frac{1}{n^{s}} \Big\vert 
= \sup_{\re s > \varepsilon} \vert D(s) \vert
= \sup_{\re s > 0} \vert D(s+\varepsilon) \vert
= \sup_{\re s > 0} \big\vert \sum_{n=1}^{\infty} a_{n} \frac{1}{n^{s + \varepsilon}} \Big\vert 
= \sup_{\re s > 0} \big\vert \sum_{n=1}^{\infty} \frac{a_{n}}{n^{\varepsilon}} \frac{1}{n^{s}} \Big\vert \,.
\] 
So Bayart's Montel theorem for Dirichlet series tells us that, if $D_{N}$ is the sequence of functions associated to the Dirichlet series, then there is a subsequence and a Dirichlet series with limit function $D$ so that the translated Dirichlet series $D_{N_{k}}(s+\varepsilon)$ converges to $D(s+\varepsilon)$  uniformly on $[\re s >0]$ for every $\varepsilon >0$. Or, to put it in other terms: \textit{let $\{ \sum a_{n}^{(N)} n^{-s} \}_{N}$ be a bounded sequence in 
$\mathcal{H}_{\infty}$; then there is a subsequence $\{ \sum a_{n}^{(N_{k})} n^{-s} \}_{k}$ and a Dirichlet series $\sum a_{n} n^{s}$ in $\mathcal{H}_{\infty}$ so that $\{ \sum \frac{a_{n}^{(N_{k})}}{n^{\varepsilon}} n^{-s} \}_{k}$ converges  in $\mathcal{H}_{\infty}$  to 
$\sum \frac{a_{n}}{n^{\varepsilon}} n^{-s}$ for every $\varepsilon >0$.} \\

Hardy spaces of Dirichlet series (denoted $\mathcal{H}_{p}$, for $1 \leq p < \infty$) were introduced by Bayart in \cite{bayart2002hardy} in the following way: given $1 \leq p < \infty$, the expresion
\[
\Big\Vert \sum_{n=1}^{N} a_{n} n^{-s} \Big\Vert_{p}
= \lim_{R \to \infty} \bigg( \frac{1}{2R} \int_{-R}^{R} \Big\vert \sum_{n=1}^{N} a_{n} n^{-it} \Big\vert^{p}  dt  \bigg)^{\frac{1}{p}} \,.
\]
defines a norm on the space of Dirichlet polynomials (i.e. finite Dirichlet series). Then the space $\mathcal{H}^{p}$ is defined as the completion of the Dirichlet polynomials under this norm.
These spaces are also closely related with the Hardy spaces of holomorphic functions that we considered above. 
A Dirichlet series $\sum a_{n} n^{-s}$ belongs to $\mathcal{H}_{p}$ if and only if there exists $f \in H_{p} (\ell_{2} \cap \mathbb{D}^{\mathbb{N}})$ so that $a_{n} = c_{\alpha} (f)$ (recall \eqref{c-alfa}) whenever $n=  \mathfrak{p}^{\alpha}$ (where $\mathfrak{p} = (\mathfrak{p}_{k})_{k}$ is the sequence of prime numbers), and both have the same norm (see \cite[Theorem~3.9]{badefrmase2017} or \cite[Corollary~13.3]{defant2018Dirichlet}). In other words, 
\begin{equation} \label{Hps}
\mathcal{H}_{p} = H_{p}(\ell_2\cap \mathbb{D}^{\mathbb{N}})
\end{equation}
as Banach spaces.\\

Our aim now is to show the following version of Bayart's Montel-type theorem for Hardy spaces.

\begin{theorem}\label{teo montel series hardy}
Let $\{ \sum{a_n^{(N)}n^{-s}} \}_{N}$ be a bounded sequence in $\mathcal{H}^{p}$. Then there exist $\sum{a_{n}n^{-s}}\in\mathcal{H}^{p}$, and a subsequence $\{ \sum a_{n}^{(N_{k})} n^{-s} \}_{k}$ such that 
$\{ \sum \frac{a_{n}^{(N_{k})}}{n^{\varepsilon}} n^{-s} \}_{k}$ converges to $\sum \frac{a_{n}}{n^{\varepsilon}} n^{-s}$ in $\mathcal{H}^{p}$
for every $\varepsilon >0$.
\end{theorem}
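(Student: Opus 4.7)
My plan is to use the isometric identification $\mathcal{H}^{p}=H_{p}(\ell_{2}\cap\mathbb{D}^{\mathbb{N}})$ from \eqref{Hps} to transfer the problem to the holomorphic Hardy space, and then apply Proposition~\ref{prop: Montel para funciones Hp}. Given a bounded sequence $\{\sum a_{n}^{(N)}n^{-s}\}_{N}$ in $\mathcal{H}^{p}$, it corresponds via the Bohr lift to a bounded sequence $(f_{N})_{N}\subseteq H_{p}(\ell_{2}\cap\mathbb{D}^{\mathbb{N}})$. By Proposition~\ref{prop: Montel para funciones Hp} I extract a subsequence $(f_{N_{k}})_{k}$ converging uniformly on compact subsets of $\ell_{2}\cap\mathbb{D}^{\mathbb{N}}$ to some $f\in H_{p}(\ell_{2}\cap\mathbb{D}^{\mathbb{N}})$, and define $\sum a_{n}n^{-s}\in\mathcal{H}^{p}$ as the Dirichlet series associated with $f$ (so $a_{n}=c_{\alpha}(f)$ whenever $n=\mathfrak{p}^{\alpha}$).

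The crucial observation is that, under the Bohr identification, the translation $D\mapsto\sum\frac{a_{n}}{n^{\varepsilon}}n^{-s}$ corresponds to the composition operator $T_{\varepsilon}\colon g(z)\mapsto g(\mathfrak{p}^{-\varepsilon}z)$ on $H_{p}(\ell_{2}\cap\mathbb{D}^{\mathbb{N}})$. Since $|\mathfrak{p}_{k}^{-\varepsilon}z_{k}|<|z_{k}|<1$ for $z\in\ell_{2}\cap\mathbb{D}^{\mathbb{N}}$ and $\varepsilon>0$, this is well-defined; by monotonicity of the $p$-means in the radii (subharmonicity of $|g|^{p}$) $T_{\varepsilon}$ is moreover a contraction on $H_{p}(\ell_{2}\cap\mathbb{D}^{\mathbb{N}})$. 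The theorem thus reduces to the claim that $\|T_{\varepsilon}(f_{N_{k}}-f)\|_{H_{p}}\to 0$ for every $\varepsilon>0$.

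Unfolding the $H_{p}$ norm,
\[
\|T_{\varepsilon}(f_{N_{k}}-f)\|_{H_{p}}^{p}=\sup_{n\in\mathbb{N}}\sup_{0<r<1}\int_{\mathbb{T}^{n}}|(f_{N_{k}}-f)(r\mathfrak{p}_{1}^{-\varepsilon}w_{1},\ldots,r\mathfrak{p}_{n}^{-\varepsilon}w_{n},0,\ldots)|^{p}\mathrm{d}w,
\]
so all the arguments lie inside the shrunken polydisc $K_{\varepsilon}:=\{z\in\ell_{2}\colon|z_{k}|\le\mathfrak{p}_{k}^{-\varepsilon}\}$. For $\varepsilon>1/2$ the radii $(\mathfrak{p}_{k}^{-\varepsilon})_{k}$ are square-summable, so $K_{\varepsilon}$ is a ``Hilbert brick'' lying strictly inside $\mathbb{D}^{\mathbb{N}}$, hence compact in $\ell_{2}\cap\mathbb{D}^{\mathbb{N}}$; then
\[
\|T_{\varepsilon}(f_{N_{k}}-f)\|_{H_{p}}^{p}\le\sup_{z\in K_{\varepsilon}}|(f_{N_{k}}-f)(z)|^{p}\to 0
\]
by the uniform convergence on compacts supplied by Proposition~\ref{prop: Montel para funciones Hp}.

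The hard part will be extending this to $0<\varepsilon\le 1/2$, where $K_{\varepsilon}$ fails to be $\ell_{2}$-bounded and the direct compact-set argument breaks down. My intended fix is to show that $T_{\varepsilon}\colon\mathcal{H}^{p}\to\mathcal{H}^{p}$ is a compact operator for every $\varepsilon>0$: on the Dirichlet side the Bohr multiplier is $n^{-\varepsilon}$, which tends to $0$, so one should be able to approximate $T_{\varepsilon}$ in operator norm by its truncations $T_{\varepsilon}P_{M}$ (where $P_{M}$ keeps only the Dirichlet indices $n\le M$), each compact by the previous compact-set argument combined with a finite-dimensional Montel. Once $T_{\varepsilon}$ is compact, every subsequence of $(T_{\varepsilon}f_{N_{k}})_{k}$ admits a norm-convergent sub-subsequence whose limit must coincide with $T_{\varepsilon}f$ (by the already-established uniform-on-compacts convergence $f_{N_{k}}\to f$), and therefore the whole sequence $T_{\varepsilon}f_{N_{k}}$ converges to $T_{\varepsilon}f$ in $\mathcal{H}^{p}$.
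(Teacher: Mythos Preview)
Your outline is sound and your route genuinely differs from the paper's. Both proofs begin identically---transfer to $H_{p}(\ell_{2}\cap\mathbb{D}^{\mathbb{N}})$ via \eqref{Hps}, invoke Proposition~\ref{prop: Montel para funciones Hp}, and read off coefficient convergence $a_{n}^{(N_{k})}\to a_{n}$. From there the paper works directly on the Dirichlet side: it proves a \emph{uniform} tail estimate \eqref{vanaert} by Abel summation together with the partial-sum bound \eqref{2/4}, obtaining $\bigl\Vert\sum_{n>l}\tfrac{a_{n}}{n^{\varepsilon}}n^{-s}\bigr\Vert_{\mathcal{H}^{p}}\le 2\varepsilon C M\sum_{n>l}\tfrac{\log n}{n^{\varepsilon+1}}$ with the constant depending only on the bound of the original sequence; then a finite head plus coefficient convergence finishes. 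Your reformulation in terms of the operator $T_{\varepsilon}$ is cleaner conceptually, and for $\varepsilon>\tfrac12$ your compact-brick argument is both correct and more transparent than the paper's computation.

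The gap is in the range $0<\varepsilon\le\tfrac12$. Your justification for compactness of $T_{\varepsilon}$---``the multiplier $n^{-\varepsilon}$ tends to $0$, so one should be able to approximate $T_{\varepsilon}$ in operator norm by $T_{\varepsilon}P_{M}$''---is only a heuristic. In a general Banach space a diagonal multiplier with entries tending to zero need not be compact; what you must actually prove is
\[
\sup_{\Vert D\Vert_{\mathcal{H}^{p}}\le 1}\ \Bigl\Vert\sum_{n>M}\frac{a_{n}}{n^{\varepsilon}}\,n^{-s}\Bigr\Vert_{\mathcal{H}^{p}}\xrightarrow[M\to\infty]{}0,
\]
and this is \emph{exactly} the uniform tail estimate the paper establishes via Abel summation and \eqref{2/4}. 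So as written your argument defers, rather than avoids, the paper's main computation. If you want to keep the operator-theoretic flavour and genuinely bypass Abel summation, one workable alternative is to factor through $\mathcal{H}^{2}$: by the translation/hypercontractive bound cited in the paper (\cite[Theorem~12.9]{defant2018Dirichlet}) the maps $T_{\varepsilon/2}\colon\mathcal{H}^{p}\to\mathcal{H}^{2}$ and $T_{\varepsilon/2}\colon\mathcal{H}^{2}\to\mathcal{H}^{p}$ are bounded, while $T_{\varepsilon/2}\colon\mathcal{H}^{2}\to\mathcal{H}^{2}$ is trivially compact (diagonal on an orthonormal basis with eigenvalues $n^{-\varepsilon/2}\to 0$); composing gives compactness of $T_{\varepsilon}$ on $\mathcal{H}^{p}$ for every $\varepsilon>0$, after which your sub-subsequence argument goes through.
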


Note that $\varepsilon=0$ cannot be taken in Theorem \ref{teo montel series hardy}. For example, given $1\leq p < \infty$ the sequence of monomials $(n^{-s})_{n}$ is bounded ($ \Vert n^{-s} \Vert_{\mathcal{H}_{p}}=1$) but it does not have a convergent subsequence. \\

Recently in \cite[Theorem~4.20]{defantschoolmann_2019}, using techniques of harmonic analysis on compact abelian groups, a Montel-type theorem has been independently obtained in the more general setting of general Dirichlet series. 
Nevertheless there is no statement written explicitly for the classical Hardy spaces of Dirichlet series.
Also, Bayart \cite{bayart2019personal} has drawn our attention to a direct proof of Theorem~\ref{teo montel series hardy} based on the $p=2$ case (somewhat easier to handle) and an argument through translation of Dirichlet series. 
We go here a different way, showing how it can be obtained also from Theorem~\ref{prop: Montel para funciones Hp}.\\

Let us make first a short comment that we will use within the proof. First of all, let us recall that there is a constant $C>0$ so that, for every $1 \leq p <\infty$, all $x \geq 2$ and $\sum a_{n} n^{-s} \in \mathcal{H}^{p}$,
\begin{equation} \label{2/4}
\bigg\Vert \sum_{n \leq x} a_{n} n^{-s} \bigg\Vert_{\mathcal{H}^{p}}
\leq C \log x \bigg\Vert \sum a_{n} n^{-s} \bigg\Vert_{\mathcal{H}^{p}}
\end{equation}
(see \cite[Theorem~12.5]{defant2018Dirichlet}). If we translate the series a little bit we can say more.\\
If a Dirichlet series $\sum a_{n}n^{-s}$ belongs to $\mathcal{H}^{p}$ for some $1 \leq p < \infty$, then the translated series $\sum \frac{a_{n}}{n^{\varepsilon}}n^{-s}$ belongs to $\mathcal{H}^{q}$ for all $\varepsilon>0$ and every $p<q<\infty$ (see \cite[Section~3]{bayart2002hardy} or \cite[Theorem~12.9]{defant2018Dirichlet}). 
This, combined with the fact that 	
the monomials  $\{n^{-s} \}_{n}$ form a Schauder basis of $\mathcal{H}^{q}$ (see \cite{alemanolsensaksman2014}) for $1 < q < \infty$ and the monotonicity of the $\mathcal{H}_{p}$-norms immediately imply
\begin{equation} \label{mar}
\lim_{l \to \infty} \bigg\Vert \sum \frac{a_n}{n^{\varepsilon}} n^{-s} - \sum_{n=1}^{l} \frac{a_n}{n^{\varepsilon}}n^{-s} \bigg\Vert_{\mathcal{H}^{p}} =0 
\end{equation}
for every $\varepsilon >0$. 

\begin{proof}[Proof of Theorem~\ref{teo montel series hardy}]
By \eqref{Hps}, we have a bounded sequence $(f_{N})_{N}$ in $H_{p}(\ell_2\cap \mathbb{D}^{\mathbb{N}})$ and, by Theorem~\ref{prop: Montel para funciones Hp}, a subsequence $(f_{N_{k}})_{k}$ that
converges uniformly on the compact sets to some $f \in H_{p}(\ell_2\cap \mathbb{D}^{\mathbb{N}})$. Moreover, if $\sum a_{n} n^{-s} \in \mathcal{H}^{p}$ is the Dirichlet series associated to $f$ by \eqref{Hps}, then \eqref{c-alfa} (and the uniform convergence on compact sets) yields $a_n^{(N_k)}=c_{\alpha}(f_{N_{k}})\rightarrow{c_{\alpha}(f)}=a_n$ as $k\rightarrow\infty$. \\
Fix $\varepsilon >0$ and note that, from \eqref{mar} we have that the partial sums of $\sum \frac{a_{n}}{n^{\varepsilon}} n^{-s}$ and $\sum \frac{a_{n}^{(N_{k})}}{n^{\varepsilon}} n^{-s}$ converge to the corresponding series. 
Our first aim is to show that we can control
the convergence of all these partial sums uniformly in some sense. To be more precise, our goal now is to show that for every $\eta >0$ there is some $l_{0} >0$ such that 
\begin{equation}\label{vanaert}
\Big\Vert \sum \frac{a_{n}}{n^{\varepsilon}} n^{-s} - \sum_{n=1}^{l} \frac{a_n}{n^{\varepsilon}}n^{-s} \Big\Vert_{\mathcal{H}^{p}}<\eta \quad \text{ and } \quad
\Big\Vert \sum \frac{a_{n}^{(N_{k})}}{n^{\varepsilon}} n^{-s} - \sum_{n=1}^{l} \frac{a_n^{(N_{k})}}{n^{\varepsilon}}n^{-s} \Big\Vert_{\mathcal{H}^{p}}<\eta
\end{equation}
for every $l \geq l_{0}$.
To begin with we fix $l \geq 2$ and  \eqref{mar} gives
\[
\Big\Vert \sum \frac{a_{n}}{n^{\varepsilon}} n^{-s} - \sum_{n=1}^{l} \frac{a_n}{n^{\varepsilon}} n^{-s} \Big\Vert_{\mathcal{H}^{p}}
=
\lim_{j \rightarrow \infty} \Big\Vert \sum_{n=l+1}^{j} \frac{a_n}{n^{\varepsilon}} n^{-s} \Big\Vert_{\mathcal{H}^{p}} \,.
\]
We denote $M= \max \big\{ \big\Vert \sum{a_n n^{-s}} \big\Vert_{\mathcal{H}^{p}}, \sup_{N} \big\Vert \sum{a_n^{(N)}n^{-s}} \big\Vert_{\mathcal{H}^{p}} \big\}$ and, for each $j \geq l+1$ Abel summation and \eqref{2/4} give
\begin{align*}
\bigg\Vert\bigg(  \sum_{n=l+1}^{j} a_n n^{-s} \bigg) & \frac{1}{j^{\varepsilon}}
+ \sum_{n=l+1}^{j-1} \bigg( \sum_{m=l+1}^{n} a_m m^{-s} \bigg) \bigg( \frac{1}{n^{\varepsilon}} - \frac{1}{(n+1)^{\varepsilon}} \bigg) \bigg\Vert_{\mathcal{H}^{p}}\\
\leq&  \bigg\Vert \sum_{n=l+1}^{j} a_n n^{-s} \bigg\Vert_{\mathcal{H}^{p}} \frac{1}{j^{\varepsilon}}
+ \sum_{n=l+1}^{j-1} \bigg\Vert \sum_{m=l+1}^{n} a_m m^{-s} \bigg\Vert_{\mathcal{H}^{p}} 
\bigg(\frac{1}{n^\varepsilon} - \frac{1}{(n+1)^{\varepsilon}} \bigg)\\
\leq & \bigg( \bigg\Vert \sum_{n=1}^{j} a_{n} n^{-s} \bigg\Vert_{\mathcal{H}^{p}} + \Vert \sum_{n=1}^{l} a_{n} n^{-s} \bigg\Vert_{\mathcal{H}^{p}} \bigg) \frac{1}{j^{\varepsilon}} \\
& + \sum_{n=l+1}^{j-1} \bigg( \bigg\Vert \sum_{m=1}^{n} a_m m^{-s} \bigg\Vert_{\mathcal{H}^{p}} + \bigg\Vert \sum_{m=1}^{l} a_m m^{-s} \bigg\Vert_{\mathcal{H}^{p}} \bigg)
\bigg(\frac{1}{n^\varepsilon} - \frac{1}{(n+1)^{\varepsilon}} \bigg) \\
 \leq & \big( C \log(j) M + C \log(l) M \big) \frac{1}{j^{\varepsilon}} \\
& + \sum_{n=l+1}^{j-1} \big( C \log(n) M+ C \log(l) M \big)
\bigg(\frac{1}{n^\varepsilon} - \frac{1}{(n+1)^{\varepsilon}} \bigg) \\
\leq & 2 C \log(j) M \frac{1}{j^{\varepsilon}}
+ \sum_{n=l+1}^{j-1} 2 C \log(n) M \frac{\varepsilon}{n^{\varepsilon+1}}\\
\leq & 2 C \log(j) M \frac{1}{j^{\varepsilon}} + \sum_{n=l+1}^{j-1} 2 C \log(n) M \frac{\varepsilon}{n^{\varepsilon+1}}\, .
\end{align*}
Hence
\[
\Big\Vert \sum \frac{a_{n}}{n^{\varepsilon}} n^{-s}  - \sum_{n=1}^{l} \frac{a_n}{n^{\varepsilon}} n^{-s} \Big\Vert_{\mathcal{H}^{p}}
\leq \varepsilon C M \sum_{n=l+1}^{\infty} \frac{\log(n)}{n^{\varepsilon+1}} \,.
\]
For each fixed $k$, exactly the same computations give the same inequality for $\sum a_{n}^{(N_{k})} n^{-s}$. Since the term on the right-hand-side tends to $0$ as $l \to \infty$ we can find $l_{0}$ satisfying \eqref{vanaert}.\\
Set $L=\max_{1 \leq n \leq l_{0}} \{ \Vert n^{-s} \Vert_{\mathcal{H}^{p}}\}$ and pick $k_{0} \in \mathbb{N}$, such that if $k \geq k_{0}$ then $| a_n^{(N_{k})} - a_n | < \frac{\eta}{l_{0}L}$ for all $1 \leq n \leq l_{0}$.
With all this we finally have, for $k \geq k_{0}$
\begin{align*}
\bigg\Vert \sum \frac{a_{n}^{(N_{k})}}{n^{\varepsilon}} n^{-s} - \sum \frac{a_{n}}{n^{\varepsilon}} n^{-s}  \bigg\Vert_{\mathcal{H}^{p}}
\leq & \bigg\Vert \sum \frac{a_{n}^{(N_{k})}}{n^{\varepsilon}} n^{-s} - \sum_{n=1}^{l_{0}} \frac{a_n^{(N_{k})}}{n^{\varepsilon}}n^{-s} \bigg\Vert_{\mathcal{H}^{p}} \\
& + \bigg\Vert \sum \frac{a_{n}}{n^{\varepsilon}} n^{-s}  - \sum_{n=1}^{l_{0}} \frac{a_n}{n^{\varepsilon}}n^{-s} \bigg\Vert_{\mathcal{H}^{p}} + \bigg\Vert \sum_{n=1}^{l_{0}} \frac{a_n^{(N_{k})}-a_n}{n^{\varepsilon}}n^{-s} \bigg\Vert_{\mathcal{H}^{p}}\\
\leq & 2 \eta + \sum_{n=1}^{l_{0}} \frac{|a_n^{(N_{k})}-a_n |}{n^{\varepsilon}} \big\Vert  n^{-s} \big\Vert_{\mathcal{H}^{p}} \\
\leq & 2 \eta + \sum_{n=1}^{l_{0}} |a_n^{(N_{k})}-a_n | \, \big\Vert n^{-s} \big\Vert_{\mathcal{H}^{p}}
<3\eta \,. \qedhere
\end{align*}
\end{proof}

\section*{Acknowledgements}
The authors thank Daniel Suarez for the clarifying observations regarding what is known in the theory of one-complex variable.
%
%\bibliographystyle{abbrv}
%\bibliography{bibliomontel}

\noindent %\textbf{Authors addresses:}\\
T.~Fern\'andez Vidal, D.~Galicer\\
Departamento de Matem\'{a}tica,
Facultad de Cs. Exactas y Naturales, Universidad de Buenos Aires and IMAS-CONICET. Ciudad Universitaria, Pabell\'on I (C1428EGA) C.A.B.A., Argentina, tfernandezvidal@yahoo.com.ar, dgalicer@dm.uba.ar\\ 

\noindent P.~Sevilla-Peris\\	
Insitut Universitari de Matem\`atica Pura i Aplicada. Universitat Polit\`ecnica de Val\`encia. Cmno Vera s/n 46022, Spain, psevilla@mat.upv.es

\end{document}